\documentclass[oneside,english]{amsart}
\usepackage[OT1]{fontenc}
\usepackage[latin9]{inputenc}
\usepackage{amsthm}
\usepackage{amssymb}

\makeatletter
\numberwithin{equation}{section}
\numberwithin{figure}{section}
\theoremstyle{plain}
\newtheorem{thm}{\protect\theoremname}
  \theoremstyle{definition}
  \newtheorem{defn}[thm]{\protect\definitionname}
  \theoremstyle{plain}
  \newtheorem{prop}[thm]{\protect\propositionname}
  \theoremstyle{plain}
  \newtheorem{lem}[thm]{\protect\lemmaname}
  \theoremstyle{plain}
  \newtheorem{cor}[thm]{\protect\corollaryname}

\makeatother

\usepackage{babel}
  \providecommand{\corollaryname}{Corollary}
  \providecommand{\definitionname}{Definition}
  \providecommand{\lemmaname}{Lemma}
  \providecommand{\propositionname}{Proposition}
\providecommand{\theoremname}{Theorem}

\begin{document}

\title[Functional equation of normalized Shintani L-function]{On the functional equation of the normalized Shintani l-function
of several variables}
\begin{abstract}
In this paper, we introduce the normalized Shintani L-function of
several variables by an integral representation and prove its functional
equation. The Shintani L-function is a generalization to several variables
of the Hurwitz-Lerch zeta function and the functional equation given
in this paper is a generalization of the functional equation of Hurwitz-Lerch
zeta function. In addition to the functional equation, we give special
values of the normalized Shintani L-function at non-positive integers
and some positive integers.
\end{abstract}

\author{Minoru Hirose, Nobuo Sato}

\maketitle

\section{Introduction}

The Shintani zeta functions and the Shintani L-functions are originally
introduced by Takuro Shintani in his study of Hecke L-functions of
totally real algebraic number fields \cite{key-Shintani}. He gave,
for example, special values of Hecke L-functions at non-positive integers,
by representing a Hecke L\emph{-}function as a linear combination
of the Shintani zeta functions. Shintani's original zeta and L-functions
are of one variable, but in this paper we consider Shintani L-functions
of several variables (see \cite{key-Hida}), since sometimes they
have more refined information. 

Functional equations for zeta and L-functions of one variable are
classically known, for example, for Hecke L-functions \cite{key-Hecke}
which are further developed in the Iwasawa-Tate theory \cite{key-Tate}.
Functional equations for zeta and L-functions of several variables
are known, for example, for those associated to prehomogeneous vector
spaces \cite{key-SatoFumihiro}. However, such functional equations
for the Shintani zeta functions or the Shintani L-functions have not
been known in general except for the case of the Hurwitz-Lerch zeta
function \cite{key-Lerch} and the case of the double zeta function\cite{key-Matsumoto}.
In this paper, we show that the normalized Shintani L-functions generally
satisfy nice functional equations, which is a generalization of the
functional equation of the Hurwitz-Lerch zeta function. The functional
equation of the normalized Shintani L-function seems applicable to
refine the functional equations of Hecke L-functions of totally real
number fields.

First of all, we give a definition (Definition \ref{def:Series-expression})
of the Shintani L-function by a multiple Dirichlet series. Then, we
give an integral representation of the Shintani L-function (Proposition
\ref{prop:Integral-representation}) by which we extend the definition
of the Shintani L-function. We then use a similar integral representation
to define the normalized Shintani L-function (Definition \ref{def:Normal Shintani}).
This extension of the definition is necessary to formulate the functional
equation. Throughout this paper, we use the notations 
\begin{align*}
e\left(z\right) & =e^{2\pi iz},\\
\Gamma_{\mathbb{R}}\left(s\right) & =\pi^{-\frac{s}{2}}\Gamma(\frac{s}{2}),\\
\Gamma_{\mathbb{C}}\left(s\right) & =2\left(2\pi\right)^{-s}\Gamma(s),
\end{align*}
and for $u=\left(u_{1},\ldots,u_{r}\right)$, $v=\left(v_{1},\ldots,v_{r}\right)\in\mathbb{R}^{r}$
and $s=\left(s_{1},\ldots,s_{r}\right)\in\mathbb{C}^{r}$, 
\begin{align*}
uv & =u_{1}v_{1}+\cdots+u_{r}v_{r},\\
u^{s} & =u_{1}^{s_{1}}\cdots u_{r}^{s_{r}},\\
1-u & =\left(1-u_{1},\ldots,1-u_{r}\right),
\end{align*}
for simplicity. We also use the notation $\left(0,1\right)$ for the
unit open interval.
\begin{defn}
\label{def:Series-expression}

For $s=(s_{\nu})_{\nu=1}^{r}\in\mathbb{\mathbb{C}}^{r}$ with $\Re(s_{1}+\cdots+s_{r})>r$,
$A=(a_{\nu\mu})_{\nu,\mu=1}^{r}\in M_{r}(\mathbb{R}_{>0})$, and $x=(x_{\nu})_{\nu=1}^{r},y=(y_{\mu})_{\mu=1}^{r}\in(0,1)^{r}$
define the Shintani L-function $L\left(s,A,x,y\right)$ of order $r$
by
\begin{align*}
L(s,A,x,y) & =\sum_{0\leq m_{1},\cdots,m_{r}}\frac{e(m_{1}y_{1}+\cdots+m_{r}y_{r})}{\prod_{\nu=1}^{r}(\sum_{\mu=1}^{r}a_{\nu\mu}(x_{\mu}+m_{\mu}))^{s_{\nu}}}\\
 & =\sum_{0\leq m}\frac{e\left(my\right)}{(A(x+m))^{s}},
\end{align*}
It is easy to check that the series on the right-hand side converges
absolutely in the half space $\Re(s_{1}+\cdots+s_{r})>r$ and conditionally
for $\Re(s_{1}+\cdots+s_{r})>0$.
\end{defn}
$L(s,A,x,y)$ has an integral representation which extends the domain
of definition of $A$ to $GL_{r}(\mathbb{R})$. This expression is
very important in our context, since we cannot avoid $A$ with negative
entries for our purpose. Put, for simplicity, 
\begin{align*}
\varphi(t,x,y) & =\frac{e(itx)}{1-e(y+it)}\\
F\left(t,x,y\right) & =\prod_{\nu=1}^{r}\varphi(t_{\nu},x_{\nu},y_{\nu}).
\end{align*}
Then we have the following integral representation for $L(s,A,x,y)$. 
\begin{prop}
\label{prop:Integral-representation}

For $\Re(s_{1}),\ldots,\Re(s_{r})>0$, the Shintani L-function $L(s,A,x,y)$
of \emph{degree} $r$ has the following integral representation.
\[
L(s,A,x,y)=\frac{2^{r}}{\Gamma_{\mathbb{C}}(s_{1})\cdots\Gamma_{\mathbb{C}}(s_{r})}\int_{0}^{\infty}\cdots\int_{0}^{\infty}F(tA,x,y)t^{s}\frac{dt_{1}}{t_{1}}\cdots\frac{dt_{r}}{t_{r}}
\]

\end{prop}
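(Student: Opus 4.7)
The plan is to expand each factor $\varphi$ of $F$ as a geometric series, interchange the sum with the multiple integral, and compute the resulting product of one-dimensional $\Gamma$-integrals; the normalization constant will exactly match the $\Gamma$ and $2\pi$ factors picked up so as to recover the defining Dirichlet series of $L(s,A,x,y)$.

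Concretely, for $t>0$ one has $|e(y+it)|=e^{-2\pi t}<1$, so a geometric series expansion gives
\[
\varphi(t,x,y)=\frac{e^{-2\pi tx}}{1-e(y)e^{-2\pi t}}=\sum_{m\geq 0}e(my)\,e^{-2\pi t(x+m)}.
\]
Applying this componentwise inside $F(tA,x,y)=\prod_{\nu}\varphi((tA)_{\nu},x_{\nu},y_{\nu})$ and invoking the associativity identity $\sum_{\nu}(tA)_{\nu}(x_{\nu}+m_{\nu})=\sum_{\nu}t_{\nu}(A(x+m))_{\nu}$ (after suitable relabelling of dummy indices) rewrites
\[
F(tA,x,y)=\sum_{m\geq 0}e(my)\exp\Bigl(-2\pi\sum_{\nu}t_{\nu}(A(x+m))_{\nu}\Bigr).
\]
Substituting this into the right-hand side of the Proposition, interchanging summation and integration, and evaluating the decoupled $\Gamma$-integrals
\[
\int_{0}^{\infty}e^{-2\pi t_{\nu}(A(x+m))_{\nu}}t_{\nu}^{s_{\nu}}\frac{dt_{\nu}}{t_{\nu}}=(2\pi)^{-s_{\nu}}\Gamma(s_{\nu})\,(A(x+m))_{\nu}^{-s_{\nu}},
\]
which is valid since $\Re(s_{\nu})>0$ and $(A(x+m))_{\nu}>0$ (both $A$ and $x+m$ having positive entries), produces an overall factor $(2\pi)^{-\sum s_{\nu}}\prod\Gamma(s_{\nu})=2^{-r}\prod\Gamma_{\mathbb{C}}(s_{\nu})$ multiplying the Dirichlet series. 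This is exactly the reciprocal of the normalization $2^{r}/\prod\Gamma_{\mathbb{C}}(s_{\nu})$, and what remains is $\sum_{m\geq 0}e(my)/(A(x+m))^{s}=L(s,A,x,y)$.

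The only substantive point is the Fubini step. Since the integrand's absolute value equals $e^{-2\pi\sum_{\nu}t_{\nu}(A(x+m))_{\nu}}\prod t_{\nu}^{\Re(s_{\nu})-1}$, which is nonnegative, Tonelli applies provided the total mass is finite. Integrating first gives $(2\pi)^{-\sum\Re(s_{\nu})}\prod\Gamma(\Re(s_{\nu}))\cdot\sum_{m\geq 0}(A(x+m))^{-\Re(s)}$, and the latter series converges whenever $\Re(s_{1}+\cdots+s_{r})>r$ by the absolute convergence statement in Definition~\ref{def:Series-expression}. Hence the identity first holds in the overlap region $\{\Re(s_{\nu})>0\}\cap\{\Re(s_{1}+\cdots+s_{r})>r\}$. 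To reach the full region $\Re(s_{\nu})>0$ advertised in the Proposition, one observes that the integrand is dominated near $t=0$ by a constant (because $y\in(0,1)^{r}$ keeps each denominator $1-e(y_{\mu})e^{-2\pi(tA)_{\mu}}$ away from zero) and by an exponentially decaying factor as $|t|\to\infty$ (since $A$ and $x$ have positive entries), so the integral defines a holomorphic function of $s$ on $\{\Re(s_{\nu})>0\}$, and can therefore be used to extend $L(s,A,x,y)$ to this larger domain, in accordance with the program announced in the introduction.
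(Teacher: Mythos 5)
Your proof is correct and is exactly the standard argument the paper has in mind when it says ``the proof is easy'': geometric expansion of each $\varphi$, Tonelli on the region of absolute convergence, evaluation of the decoupled Gamma integrals (with the constant matching $\Gamma_{\mathbb{C}}(s)=2(2\pi)^{-s}\Gamma(s)$), and holomorphy of the integral on all of $\{\Re s_{\nu}>0\}$ to cover the full stated range. The paper itself omits the proof, so your write-up supplies precisely the intended details.
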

The proof is easy. Note again that, since the integral on the left
hand side of Proposition \ref{prop:Integral-representation} is defined
for $A\in GL_{r}(\mathbb{R})$, we extend the definition of $L(s,A,x,y)$
by this integral representation. Next we introduce the \emph{normalized}
Shintani L-functions. For a map $\chi:\{1,\cdots,r\}\rightarrow\{0,1\}$,
set the quasi-character $\left|\cdot\right|_{\chi}^{s}$ on $(\mathbb{R}^{r})^{\times}$
by 
\[
\left|t\right|_{\chi}^{s}=\prod_{\nu=1}^{r}{\rm sgn}(t_{\nu})^{\chi(\nu)}\left|t_{\nu}\right|^{s_{\nu}},
\]
where sgn denote the signum function on $\mathbb{R}^{\times}.$ Then
the normalized Shintani L-function attached to $\chi$ is defined
as follows.
\begin{defn}
\label{def:Normal Shintani}For a map $\chi:\{1,\cdots,r\}\rightarrow\{0,1\}$,
define the \emph{normalized} Shintani L\emph{-}function of \emph{parity
type} $\chi$ of degree $r$ by 
\[
L_{\chi}(s,A,x,y)=\frac{1}{\Gamma_{\mathbb{C}}(s_{1})\cdots\Gamma_{\mathbb{C}}(s_{r})}\int_{(\mathbb{R}^{r})^{\times}}F\left(tA,x,y\right)\left|t\right|_{\chi}^{s}\frac{dt_{1}}{t_{1}}\cdots\frac{dt_{r}}{t_{r}},
\]
 and its \emph{completion} by 
\[
\hat{L}_{\chi}(s,A,x,y)=\left|\det(A)\right|^{\frac{1}{2}}\Gamma_{\chi}(s)L_{\chi}(s,A,x,y)
\]
where we set its gamma factor by $\Gamma_{\chi}(s)=\prod_{\nu=1}^{r}\Gamma_{\mathbb{R}}(s_{\nu}+\chi(\nu))$. 

The notion of \emph{parity type} of the normalized Shintani L-function
corresponds to the \emph{infinity type} of the Hecke L-function of
a totally real field. As easily seen, by dividing the domain of integration,
the normalized Shintani L-function is related to the ordinary Shintani
L-function by 
\[
L_{\chi}(s,A,x,y)=2^{-r}\sum_{\sigma\in\{\pm1\}^{r}}\sigma^{1-\chi}L(s,\sigma A,x,y),
\]
where for $\sigma=(\sigma_{\nu})_{\nu=1}^{r}\in\left\{ \pm1\right\} ^{r}$,
we write $\sigma^{1-\chi}=\prod_{\nu=1}^{r}\sigma_{\nu}^{1-\chi(\nu)}$
and $\sigma A=(\sigma_{\nu}a_{\nu\mu})_{\nu,\mu=1}^{r}$. The inversion
formula is simply 
\[
L\left(s,A,x,y\right)=\sum_{\chi}L_{\chi}\left(s,A,x,y\right),
\]
where $\sum_{\chi}$ means the sum over all the parity types of degree
$r.$ Thus, a normalized Shintani L-function is expressible by a finite
sum of Shintani L-functions and vica versa, but the \emph{normalized}
Shintani L\emph{-}function satisfy much simpler functional equations. 
\end{defn}
Now the main theorem.
\begin{thm}
\label{thm:Functional equation}Let $\hat{L}_{\chi}\left(s,A,x,y\right)$
be a complete normalized Shintani $L$-function of degree $r$ and
parity type $\chi$ defined on $s\in\mathbb{C}^{r}$. Set $i_{\chi}=i^{\sum_{\nu=1}^{r}\chi(\nu)}$.
Then $\hat{L}_{\chi}\left(s,A,x,y\right)$ satisfies the following
functional equation.
\[
\hat{L}_{\chi}(s,A,x,y)=i_{\chi}e\left(-xy\right)\hat{L}_{\chi}(1-s,A^{*},y,1-x).
\]
Here, $A^{*}=(A^{t})^{-1}$, the inverse transpose of $A$. 
\end{thm}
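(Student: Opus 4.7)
\medskip

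The plan is to establish the functional equation via Fourier--Mellin duality in $r$ variables, with the key input being a single one-variable Fourier transform identity for the kernel $\varphi$. Specifically, I claim
\[
\hat\varphi(\xi, x, y) = i\, e(-xy)\, \varphi(-\xi, y, 1-x),
\]
where $\hat\varphi(\xi,x,y) := \int_{-\infty}^{\infty}\varphi(t,x,y) e^{-2\pi i \xi t}\, dt$. I would derive this directly: split the integral into $t>0$ and $t<0$, expand $\varphi(t,x,y)$ as a geometric series on each ray, integrate term by term to obtain the Kronecker-type sum $\hat\varphi(\xi,x,y) = \tfrac{1}{2\pi}\sum_{m\in\mathbb{Z}} e(my)/(x+m+i\xi)$, and evaluate this sum using the classical Fourier-series identity $\sum_{m\in\mathbb{Z}} e^{2\pi imy}/(m+\alpha) = 2\pi i\, e^{-2\pi i\alpha y}/(1-e^{-2\pi i\alpha})$, valid for $0<y<1$.

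With this one-variable identity in hand, the $r$-variable statement follows by standard manipulations. Since $F$ factorizes, taking a tensor product gives $\hat F(\xi,x,y) = i^{r} e(-xy)\, F(-\xi, y, 1-x)$. For general $A$, the Fourier transform of $g(t) := F(tA, x, y)$ obeys the linear-change-of-variables rule $\hat g(\xi) = |\det A|^{-1}\hat F(\xi A^{*}, x, y)$, which is how $A^{*} = (A^{t})^{-1}$ enters. Now apply, variable by variable, the classical Mellin--Fourier identity
\[
\mathcal{F}\bigl[|t|^{s-1}\operatorname{sgn}(t)^{\chi}\bigr](\xi) = (-i)^{\chi}\, \frac{\Gamma_{\mathbb{R}}(s+\chi)}{\Gamma_{\mathbb{R}}(1-s+\chi)}\, |\xi|^{-s}\operatorname{sgn}(\xi)^{\chi},
\]
together with Parseval, to the integral defining $L_{\chi}(s,A,x,y)$. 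After rewriting the measure $dt/t$ as multiplicative Haar times $\prod_{\nu}\operatorname{sgn}(t_{\nu})$ (effectively shifting the character from $\chi$ to $1-\chi$), this relates $L_{\chi}(s,A,x,y)$ to $L_{\chi}(1-s, A^{*}, y, 1-x)$. The resulting Gamma factors recombine into $\Gamma_{\chi}(s)/\Gamma_{\chi}(1-s)$ via the duplication formula $\Gamma_{\mathbb{C}}(s) = \Gamma_{\mathbb{R}}(s)\Gamma_{\mathbb{R}}(s+1)$, while $|\det A|^{-1}$ is exactly absorbed by the asymmetry $|\det A|^{1/2}$ versus $|\det A^{*}|^{1/2} = |\det A|^{-1/2}$ between the two completions, producing the stated equation.

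The main obstacle is the rigorous justification of the one-variable Fourier identity above: since $\varphi(\cdot,x,y)$ is not Schwartz and has only exponential decay at rates $x$ and $1-x$ at $\pm\infty$, the Kronecker series equating $\hat\varphi$ with $\varphi$ at the dual arguments is only conditionally convergent and requires an appropriate summation order. A secondary technicality is justifying Parseval for the non-Schwartz function $g$, most cleanly done by first establishing the equation in the strip of absolute convergence of the original Dirichlet series and then extending by analytic continuation in $s$. Finally, the sign bookkeeping is delicate: a factor $(-1)^{r}$ appearing from the orientation-reversing substitution $\eta = -\xi$ in the signed measure $d\xi/\xi$ is precisely what cancels an otherwise spurious $(-1)^{r}$ in the constants, restoring the clean coefficient $i_{\chi}$ claimed in the theorem.
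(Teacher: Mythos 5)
Your proposal is correct and follows the same overall architecture as the paper's proof: compute the Fourier transform of the kernel $F(tA,x,y)$, feed it into the local functional equation for the characters $\left|t\right|^{s}{\rm sgn}(t)^{\chi}$ (your explicit Mellin--Fourier identity is precisely Tate's local functional equation at the real place, i.e.\ Lemma \ref{lemma:Tate's local functional equation}), absorb the discrepancy between $dt/t$ and $d^{\times}t$ into the shift $\chi\mapsto1-\chi$, recombine the Gamma factors via $\Gamma_{\mathbb{R}}(s)\Gamma_{\mathbb{R}}(s+1)=\Gamma_{\mathbb{C}}(s)$, and finish by analytic continuation. The one genuinely different ingredient is your derivation of the one-variable identity $\hat{\varphi}(\xi,x,y)=i\,e(-xy)\varphi(-\xi,y,1-x)$: you expand $\varphi$ in geometric series on each half-line and evaluate the resulting conditionally convergent Kronecker sum by the Lipschitz formula, whereas the paper (Lemma \ref{lem:Fourier transform of F}) shifts a rectangular contour from $[-\lambda,\lambda]$ to $[-\lambda+i,\lambda+i]$ and collects the single residue; the contour argument sidesteps the conditional-convergence issue you rightly flag as the delicate point of your route (which is nonetheless repairable by keeping a finite geometric sum plus a remainder dominated by $|\varphi|$). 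Two of your cautionary remarks are unnecessary, in your favor: $\varphi(\cdot,x,y)$ \emph{is} Schwartz for $x,y\in(0,1)$, since it and all its derivatives decay like $e^{-2\pi tx}$ at $+\infty$ and like $e^{-2\pi\left|t\right|(1-x)}$ at $-\infty$ and the denominator never vanishes on $\mathbb{R}$, so Parseval/Tate applies with no extra work; and the identity should first be established on the strip $0<\Re s_{\nu}<1$ where both local zeta integrals converge (not on the half-space of absolute convergence of the Dirichlet series), after which holomorphy of $\hat{L}_{\chi}$ on $\mathbb{C}^{r}$ (Corollary \ref{cor:holom}) gives the full statement exactly as in the paper.
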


\section{A proof of the functional equation}

In this section, we give a proof of Theorem \ref{thm:Functional equation}.
Our proof make use of the (inverse) Fourier transform of $F(tA,x,y)$
and Tate's local functional equation. 
\begin{lem}
\label{lem:Fourier transform of F}We have 
\[
\int_{\mathbb{R}^{r}}F(tA,x,y)e(tk)dt=\frac{i^{r}}{\left|\det A\right|}e(-yx)F(kA^{*},y,1-x).
\]
\end{lem}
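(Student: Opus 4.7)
The plan is to reduce the $r$-dimensional Fourier integral to a product of one-dimensional integrals via a linear change of variables, and then establish a single one-variable Fourier transform identity by the residue theorem.

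First I would substitute $u=tA$ in the left-hand integral. The Jacobian contributes $1/|\det A|$, and the pairing transports as $tk=(uA^{-1})k=u\cdot(kA^{*})$, where the identity $(A^{-1})^{t}=A^{*}$ explains why the dual matrix naturally appears. Since $F(u,x,y)=\prod_{\nu}\varphi(u_{\nu},x_{\nu},y_{\nu})$ and the exponential $e(u\cdot kA^{*})$ factorizes coordinate-wise, the integral becomes
\[
\int_{\mathbb{R}^{r}}F(tA,x,y)e(tk)\,dt=\frac{1}{|\det A|}\prod_{\nu=1}^{r}\int_{\mathbb{R}}\varphi(u,x_{\nu},y_{\nu})e(u(kA^{*})_{\nu})\,du,
\]
reducing the lemma to the one-variable identity
\[
\int_{\mathbb{R}}\varphi(t,x,y)e(tk)\,dt=i\cdot e(-yx)\cdot\varphi(k,y,1-x)\qquad(x,y\in(0,1),\ k\in\mathbb{R}).
\]

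To prove this identity I would apply the residue theorem. The poles of $\varphi(\cdot,x,y)$ sit on the imaginary axis at $t=i(y-n)$, $n\in\mathbb{Z}$, with those in the upper half-plane at $t_{m}=i(y+m)$, $m\ge 0$. For $k>0$ one closes the contour in the upper half-plane, where $|e(tk)|=e^{-2\pi k\,\Im t}$ decays. At each $t_{m}$ a short computation gives $\frac{d}{dt}(1-e(y+it))|_{t_{m}}=2\pi$ (using $e(y+it_{m})=e(-m)=1$), so the residue equals $\frac{1}{2\pi}e(-(y+m)x)e(i(y+m)k)$. Summing the resulting geometric series in $m$ and recognizing $1-e((1-x)+ik)=1-e^{-2\pi ix}e^{-2\pi k}$ as the denominator of $\varphi(k,y,1-x)$ produces exactly $i\cdot e(-yx)\cdot\varphi(k,y,1-x)$. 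The case $k<0$ is symmetric (close in the lower half-plane), and $k=0$ follows by continuity in $k$.

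The main technical obstacle is verifying that the closing edges of the rectangular contour contribute nothing in the limit. On the vertical sides at $\Re t=\pm T$, combining $|e(itx)|=e^{-2\pi x\Re t}$ with the bound $|1-e(y+it)|\gtrsim\max(1,e^{-2\pi\Re t})$ yields contributions of order $e^{-2\pi Tx}$ on the right and $e^{-2\pi T(1-x)}$ on the left, both decaying because $x\in(0,1)$. On the horizontal top at $\Im t=S$, choosing $S$ so that $y-S\in\mathbb{Z}+1/2$ forces $|1-e(y+it)|\ge 1$, after which the factor $e^{-2\pi Sk}$ drives the contribution to zero once $S$ is taken sufficiently larger than $T$. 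Once the one-variable identity is in hand, reassembling the $r$ factors $i\cdot e(-y_{\nu}x_{\nu})\cdot\varphi((kA^{*})_{\nu},y_{\nu},1-x_{\nu})$ yields $i^{r}e(-yx)F(kA^{*},y,1-x)$, and together with $1/|\det A|$ this is the claimed formula.
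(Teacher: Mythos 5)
Your reduction to one variable is exactly the paper's first step (the substitution $u=tA$, $h=kA^{*}$ with Jacobian $|\det A|^{-1}$ and $tk=uh$), and your one-variable identity, residue computation at the poles $i(y+m)$, and the resummation of the geometric series into $ie(-yx)\varphi(k,y,1-x)$ all check out, so the proof is correct. Where you diverge from the paper is in the contour: you close at infinity in a half-plane determined by ${\rm sgn}(k)$, sum over the infinitely many poles there, and then handle $k>0$, $k<0$, $k=0$ as separate cases, with the attendant need to thread the horizontal edge between poles (your choice $y-S\in\mathbb{Z}+\tfrac12$) and to coordinate the growth of $S$ and $T$ (note that your phrasing ``$S$ sufficiently larger than $T$'' has it backwards --- you need $S$ to grow subexponentially in $T$, e.g.\ $S\sim T$, so that $Se^{-2\pi Tx}\to0$; this is easily fixed). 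The paper instead integrates over the rectangle of \emph{fixed} height $1$ with vertices $\pm\lambda$, $\pm\lambda+i$, and uses the quasi-periodicity $\varphi(u+i,x,y)e((u+i)h)=e(-x+ih)\varphi(u,x,y)e(uh)$ to express the top edge as a multiple of the bottom one; this encloses the single pole $u=iy$, gives $(1-e(-x+ih))\int_{\mathbb{R}}\varphi(u,x,y)e(uh)\,du=ie(-y(x-ih))$ at once, and avoids both the case split on the sign of $k$ and the infinite residue sum. Your version is more in the spirit of the classical Lipschitz/Lerch summation and makes the source of the denominator $1-e((1-x)+ik)$ transparent as a geometric series; the paper's is shorter and uniform in $k$.
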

\begin{proof}
Put $u=tA$ and $h=kA^{*}$, then $dt=\left|\det A\right|^{-1}du$
and $tk=uh$. Thus we have 
\begin{align*}
\int_{\mathbb{R}^{r}}F(tA,x,y)e(tk)dt & =\frac{1}{\left|\det A\right|}\int_{\mathbb{R}^{r}}F(u,x,y)e(uh)du\\
 & =\frac{1}{\left|\det A\right|}\prod_{i=1}^{r}\int_{\mathbb{R}}\varphi(u_{i},x_{i},y_{i})e(u_{i}h_{i})du_{i},
\end{align*}
 by which we reduce the proof to one variable case. For $\lambda\in\mathbb{R}_{>0}$,
consider the positively oriented rectangular contour $C_{\lambda}$
consisting of the intervals $I_{0,\lambda}=[-\lambda,\lambda]$, $I_{1,\lambda}=[\lambda,\lambda+i]$,
$I_{2,\lambda}=[-\lambda+i,\lambda+i]$ and $I_{3,\lambda}=[-\lambda,-\lambda+i]$.
Then we have 
\[
\int_{I_{2,\lambda}}\varphi(u,x,y)e(uh)du=-e(-x+ih)\int_{I_{0,\lambda}}\varphi(u,x,y)e(uh)du,
\]
so that 
\begin{align*}
\left(1-e(-x+ih)\right)\int_{I_{0,\lambda}}+\int_{I_{1,\lambda}}+\int_{I_{3,\lambda}} & =\int_{C_{\lambda}}\\
 & =ie(-y(x-ih)).
\end{align*}
Thus, by letting $\lambda\rightarrow+\infty$ here, we have 
\begin{align*}
\int_{\mathbb{R}}\varphi(u,x,y)e(uh)du & =\frac{ie(-y(x-ih))}{1-e(-(x-ih))}\\
 & =ie(-yx)\varphi(h,y,1-x)
\end{align*}
from which we obtain Lemma \ref{lem:Fourier transform of F}.
\end{proof}
Now, we recall Tate's local functional equation for $\mathbb{R}$.
\begin{lem}
\label{lemma:Tate's local functional equation}Let $\Phi\in\mathcal{S}(\mathbb{R}^{r})$
and $\hat{\Phi}\in\mathcal{S}(\mathbb{R}^{r})$ be the inverse Fourier
transform of $\Phi$. Then, for $0<\Re s_{1},\ldots,\Re s_{r}<1$,
we have 
\[
\frac{\int_{(\mathbb{R}^{r})^{\times}}\Phi(t)\left|t\right|_{\chi}^{s}d^{\times}t}{\Gamma_{\chi}(s)}=i_{\chi}^{-1}\frac{\int_{(\mathbb{R}^{r})^{\times}}\hat{\Phi}(t)\left|t\right|_{\chi}^{1-s}d^{\times}t}{\Gamma_{\chi}(1-s)},
\]
where $d^{\times}t=\prod_{i=1}^{r}\left|t_{i}\right|^{-1}dt_{i}$
is a Haar measure of $(\mathbb{R}^{r})^{\times}$.
\end{lem}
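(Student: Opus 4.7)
The overall strategy is to reduce the $r$-dimensional identity to its one-dimensional instance by exploiting the product structure of every object involved, and then to establish the one-dimensional case by combining a bilinear identity (which forces a certain ratio to be independent of $\Phi$) with an explicit calculation on Gaussian test functions.

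For the reduction to $r=1$, observe that $|t|_\chi^s=\prod_\nu{\rm sgn}(t_\nu)^{\chi(\nu)}|t_\nu|^{s_\nu}$, $d^\times t=\prod_\nu d^\times t_\nu$, and $\Gamma_\chi(s)=\prod_\nu\Gamma_{\mathbb{R}}(s_\nu+\chi(\nu))$ all split coordinatewise, while for a tensor product $\Phi(t)=\prod_\nu\Phi_\nu(t_\nu)$ the inverse Fourier transform likewise factors as $\hat{\Phi}(k)=\prod_\nu\hat{\Phi}_\nu(k_\nu)$. Thus on such $\Phi$ both sides of the claimed identity become the $r$-fold product of the one-variable version. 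Finite linear combinations of tensor products are dense in $\mathcal{S}(\mathbb{R}^r)$, and both sides depend continuously on $\Phi$ in the Schwartz topology throughout the given $s$-range, so the identity propagates to arbitrary $\Phi$.

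For $r=1$, write $Z(s,\Phi,\chi)=\int_{\mathbb{R}^\times}\Phi(t)|t|_\chi^s d^\times t$. The key step is the bilinear identity
\[
Z(s,\Phi,\chi)\,Z(1-s,\hat{\Psi},\chi)=Z(s,\Psi,\chi)\,Z(1-s,\hat{\Phi},\chi)
\]
for $\Phi,\Psi\in\mathcal{S}(\mathbb{R})$ and $0<\Re s<1$, which I would prove by expanding both sides as double integrals over $\mathbb{R}^\times\times\mathbb{R}^\times$, substituting $u=ab$ to collapse one variable, and using the self-adjointness of the Fourier transform in the form $\int\Phi(u)\hat{\Psi}(u)du=\int\hat{\Phi}(u)\Psi(u)du$. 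The delicate point here, and the main obstacle, is justifying Fubini: absolute convergence of the double integral holds precisely in the strip $0<\Re s<1$, which is the very origin of the hypothesis in the lemma. This identity immediately implies that the ratio $Z(s,\Phi,\chi)/Z(1-s,\hat{\Phi},\chi)$ depends only on $(s,\chi)$ and not on $\Phi$.

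It then remains to compute this universal ratio on one convenient $\Phi$ per parity. For $\chi=0$ take $\Phi_0(t)=e^{-\pi t^2}$, which is self-Fourier-dual, and for $\chi=1$ take $\Phi_1(t)=te^{-\pi t^2}$; completing the square in $\int te^{-\pi t^2}e^{2\pi itk}dt$ and shifting the contour yields $\hat{\Phi}_1(k)=ike^{-\pi k^2}=i\,\Phi_1(k)$. Direct Mellin computations give $Z(s,\Phi_0,0)=\Gamma_{\mathbb{R}}(s)$ and $Z(s,\Phi_1,1)=\Gamma_{\mathbb{R}}(s+1)$, which exactly match the gamma factors in the denominators of the lemma, and the explicit $i$ from $\hat{\Phi}_1$ reproduces the $i_\chi^{-1}=i^{-1}$ prefactor on the right-hand side in the odd case. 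Multiplying across components then recovers the general $i_\chi^{-1}=i^{-\sum_\nu\chi(\nu)}$ and completes the proof.
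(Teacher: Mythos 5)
Your proposal is correct and follows essentially the same route as the paper: the paper's one-line proof is exactly your first step (density of $\mathcal{S}(\mathbb{R})^{\otimes r}$ in $\mathcal{S}(\mathbb{R}^{r})$ reduces everything to $r=1$), after which the paper simply cites Tate's local functional equation over $\mathbb{R}$. Your second and third paragraphs supply the standard proof of that cited one-variable result (the bilinear identity forcing the ratio $Z(s,\Phi,\chi)/Z(1-s,\hat{\Phi},\chi)$ to be independent of $\Phi$, evaluated on $e^{-\pi t^{2}}$ and $te^{-\pi t^{2}}$), and your sign and $\varepsilon$-factor computations with the \emph{inverse} Fourier transform convention $\hat{\Phi}_{1}=i\Phi_{1}$ correctly reproduce the $i_{\chi}^{-1}$ in the statement.
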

Since $\mathcal{S}(\mathbb{R})^{\otimes r}$ is dense in $\mathcal{S}(\mathbb{R}^{r})$,
Lemma \ref{lemma:Tate's local functional equation} immediately follows
from Tate's local functional equation. Now, by a simple calculation,
we see that 
\begin{align*}
\left|\det(A)\right|^{-\frac{1}{2}}\hat{L}_{\chi}(s,A,x,y) & =\frac{\Gamma_{\chi}(s)}{\prod_{i=1}^{r}\Gamma_{\mathbb{C}}(s_{i})}\int_{(\mathbb{R}^{r})^{\times}}F(tA,x,y)\left|t\right|_{\chi}^{s}\prod_{i=1}^{r}\frac{dt_{i}}{t_{i}}\\
 & =\frac{1}{\Gamma_{1-\chi}(s)}\int_{(\mathbb{R}^{r})^{\times}}F(tA,x,y)\left|t\right|_{1-\chi}^{s}d^{\times}t
\end{align*}
As it is clear that $F(tA,x,y)\in\mathcal{S}(\mathbb{R}^{r})$ for
$A\in GL_{r}(\mathbb{R})$ and $x,y\in(0,1)^{r}$, we have by Lemma
\ref{lem:Fourier transform of F} and Lemma \ref{lemma:Tate's local functional equation},
\[
\frac{\int_{(\mathbb{R}^{r})^{\times}}F(tA,x,y)\left|t\right|_{\chi}^{s}d^{\times}t}{\Gamma_{\chi}(s)}=\frac{i_{1-\chi}}{\left|\det A\right|}e(-xy)\frac{\int_{(\mathbb{R}^{r})^{\times}}F(tA^{*},y,1-x)\left|t\right|_{\chi}^{1-s}d^{\times}t}{\Gamma_{\chi}(1-s)}.
\]
Replacing $\chi$ by $1-\chi$ in the equality above, we obtain the
functional equation for $\hat{L}_{\chi}(s,A,x,y)$ for $0<\Re s_{1},\ldots,\Re s_{r}<1$.
Since $\hat{L}_{\chi}(s,A,x,y)$ is a holomorphic function on $\mathbb{C}^{r}$
(see Corollary \ref{cor:holom}), we can conclude that the functional
equation holds for $\mathbb{C}^{r}$.

\section{Several properties of the normalized Shintani L-function}

In this section, we note several additional properties of the normalized
Shintani L-functions. 
\begin{prop}
\emph{\label{prop:Dirichlet series for r=00003D1 case}In the case
of degree $r=1$, $L_{\chi}(s,A,x,y)$ admits the following Dirichlet
series expressions for $\Re s>1$.}
\[
L_{\chi}(s,A,x,y)=\frac{{\rm sgn}(A)}{2}\sum_{m\in\mathbb{Z}}\frac{e(my)}{\left|A(x+m)\right|_{\chi}^{s}}.
\]
\end{prop}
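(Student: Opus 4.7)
The plan is to evaluate the defining integral
\[
L_\chi(s,A,x,y) = \frac{1}{\Gamma_\mathbb{C}(s)} \int_{\mathbb{R}^\times} \varphi(tA,x,y)\,|t|_\chi^s\,\frac{dt}{t}
\]
by expanding $\varphi$ as a geometric series. First I perform the substitution $u = tA$. Using $|u/A|_\chi^s = \mathrm{sgn}(A)^\chi |u|_\chi^s / |A|^s$ and the elementary observation $dt/t = du/u$ (independent of $\mathrm{sgn}(A)$), I obtain
\[
L_\chi(s,A,x,y) = \frac{\mathrm{sgn}(A)^{\chi+1}}{|A|^s\,\Gamma_\mathbb{C}(s)} \int_{\mathbb{R}^\times} \varphi(u,x,y)\,|u|_\chi^s\,\frac{du}{u}.
\]

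Next I split the remaining integral at $u=0$. On $u>0$, the bound $|e(y+iu)|=e^{-2\pi u}<1$ permits the expansion $\varphi(u,x,y)=\sum_{m\geq 0} e(my)\,e^{-2\pi u(x+m)}$, and term-by-term integration against $u^{s-1}du$ produces $\Gamma(s)(2\pi(x+m))^{-s} = (\Gamma_\mathbb{C}(s)/2)(x+m)^{-s}$ for each $m$. On $u<0$, where $|e(y+iu)|>1$, I first rewrite $\varphi(u,x,y) = -e(iux-y-iu)/(1-e(-y-iu))$ and expand the new geometric series in the convergent direction; after the substitution $u\mapsto -u$, integration yields the contribution $-(\Gamma_\mathbb{C}(s)/2)\sum_{m\leq -1}e(my)/|x+m|^s$. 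Absolute convergence of both Dirichlet series for $\Re s>1$ justifies the interchange in each half.

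Finally I assemble. Since $|u|_\chi^s\,du/u = (-1)^{\chi+1}|u|^{s-1}du$ for $u<0$, the $u<0$ half acquires a sign $(-1)^{\chi+1}$, which together with the minus sign from the expansion contributes exactly $(-1)^\chi$. This matches the ratio $|x+m|_\chi^s/|x+m|^s$ for $m\leq -1$, so the two halves merge into $\sum_{m\in\mathbb{Z}} e(my)/|x+m|_\chi^s$. Pulling $|A|^s$ inside via $|A(x+m)|_\chi^s = \mathrm{sgn}(A)^\chi |A|^s |x+m|_\chi^s$ then collapses the sign prefactor to $\mathrm{sgn}(A)^{2\chi+1}/2 = \mathrm{sgn}(A)/2$, which is the claimed identity. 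There is no deep obstacle; the essential task is careful bookkeeping of the three sign factors --- $\mathrm{sgn}(A)^{\chi+1}$ from the substitution, $\mathrm{sgn}(u)^{\chi+1}$ from the measure on the negative half-line, and $\mathrm{sgn}(x+m)^\chi$ from the quasi-character --- which cancel down to a single $\mathrm{sgn}(A)$.
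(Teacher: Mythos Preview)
Your proof is correct and follows essentially the same route as the paper: both reduce to a geometric-series expansion of $\varphi$ on each half-line followed by term-by-term Mellin integration. The paper organizes this via the decomposition $L_\chi(s,A,x,y)=2^{-1}\sum_{\sigma\in\{\pm1\}}\sigma^{1-\chi}L(s,\sigma A,x,y)$ together with the identity $\varphi(-t,x,y)=-e(-y)\varphi(t,1-x,1-y)$ (which is exactly your ``expand in the convergent direction on $u<0$'' step) and then invokes the Dirichlet series of Definition~\ref{def:Series-expression}, whereas you perform the same computation directly inside the integral after the substitution $u=tA$.
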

\begin{proof}
We have 
\[
L_{\chi}(s,A,x,y)=\frac{1}{2}L(s,A,x,y)-\frac{(-1)^{\chi(1)}}{2}L(s,-A,x,y).
\]
 Since $\varphi(-tA,x,y)=-e(-y)\varphi(tA,1-x,1-y)$, we have 
\begin{align*}
L(s,-A,x,y) & =\frac{-e(-y)}{\Gamma_{\mathbb{C}}(s)}\int_{0}^{\infty}\varphi(tA,1-x,1-y)dt\\
 & =-e(-y)L(s,A,1-x,1-y).
\end{align*}
Hence, we obtain 
\[
L_{\chi}(s,A,x,y)=\frac{{\rm sgn}(A)^{1-\chi\left(1\right)}}{2}\left(L(s,\left|A\right|,x,y)+(-1)^{\chi(1)}e(-y)L(s,|A|,1-x,1-y)\right)
\]
 from which Proposition \ref{prop:Dirichlet series for r=00003D1 case}
follows. 
\end{proof}
From Proposition \ref{prop:Dirichlet series for r=00003D1 case},
we see that when $r=1$, $L_{\chi}(s,A,x,y)$ admits a Dirichlet series
expression, which is expressible by a combination of Hurwitz-Lerch
zeta functions. However, for $r\geq2,$ we have no Dirichlet series
expressions for the \emph{normalized} Shintani L-functions. This is
because we have no Dirichlet series expressions for $L(s,A,x,y)$
when the matrix $A$ has both positive and negative entries. Note
that though the integral representation in Proposition \ref{prop:Integral-representation}
is only valid for $x\in(0,1)$, the Dirichlet series expression in
Proposition \ref{prop:Dirichlet series for r=00003D1 case} is valid
for $x\in\mathbb{R}\setminus\mathbb{Z}$ satisfying the property
\[
L_{\chi}(s,A,x+k,y+l)=e(-ky)L_{\chi}(s,A,x,y)
\]
for $k,l\in\mathbb{Z}$. Moreover, the function $\hat{L}_{\chi}(s,A,x,y)=\left|\det(A)\right|^{\frac{1}{2}}\Gamma_{\chi}(s)L_{\chi}(s,A,x,y)$
extended by this formula still satisfy the same functional equation
as in Theorem \ref{thm:Functional equation}, since 
\begin{align*}
\hat{L}_{\chi}\left(s,A,x,y\right) & =e\left(-\left[x\right]y\right)\hat{L}_{\chi}\left(s,A,x-\left[x\right],y-\left[y\right]\right)\\
 & =i_{\chi}e\left(-\left[x\right]y-\left(x-\left[x\right]\right)\left(y-\left[y\right]\right)\right)\hat{L}_{\chi}\left(1-s,A^{*},y-\left[y\right],1-x+\left[x\right]\right)\\
 & =i_{\chi}e\left(-xy\right)\hat{L}_{\chi}\left(1-s,A^{*},y,1-x\right),
\end{align*}
where $\left[x\right]$ denotes the unique integer satisfying $\left[x\right]\leq x<\left[x\right]+1.$
Conversely, we have the following proposition.
\begin{prop}
\label{prop:The-unique-extension}The unique extension of the definition
of $L_{\chi}(s,A,x,y)$ to $x,y\in(\mathbb{R}\setminus\mathbb{Z})^{r}$
that satisfy the functional equation 
\[
\hat{L}_{\chi}(s,A,x,y)=i_{\chi}e(-xy)\hat{L}_{\chi}(1-s,A^{*},y,1-x),
\]
 and the property $L_{\chi}(s,A,x,y+k)=L_{\chi}(s,A,x,y)$, is given
by 
\[
L_{\chi}(s,A,x,y)=e(-[x]y)L_{\chi}\left(s,A,x-[x],y-[y]\right),
\]
\textup{where $[x]=([x_{1}],\ldots,[x_{r}])$.}
\end{prop}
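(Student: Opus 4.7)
The plan is to prove existence and uniqueness separately.

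For existence, I would verify that the formula
\[
L_{\chi}(s,A,x,y) = e(-[x]y)\,L_{\chi}(s,A,x-[x],y-[y])
\]
actually defines an extension with both required properties. Since $x-[x],\,y-[y]\in(0,1)^{r}$ whenever $x,y\in(\mathbb{R}\setminus\mathbb{Z})^{r}$, the right-hand side is meaningful via the original integral definition. Periodicity in $y$ is immediate: for $k\in\mathbb{Z}^{r}$ one has $(y+k)-[y+k]=y-[y]$ and $e(-[x](y+k))=e(-[x]y)$ because $[x]k\in\mathbb{Z}$. The functional equation is exactly the three-line computation displayed immediately before the statement of the proposition, so no further work is needed.

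For uniqueness, I would take a second extension $\tilde{L}_{\chi}$ satisfying both conditions and form the difference $M=\tilde{L}_{\chi}-L_{\chi}'$ with the formula-defined extension $L_{\chi}'$. By construction, $M$ vanishes on $(0,1)^{r}\times(0,1)^{r}$, is periodic in $y$, and satisfies the \emph{homogeneous} functional equation
\[
\hat{M}(s,A,x,y)=i_{\chi}e(-xy)\,\hat{M}(1-s,A^{*},y,1-x).
\]
Given arbitrary $x,y\in(\mathbb{R}\setminus\mathbb{Z})^{r}$, the strategy is to reduce to the known vanishing domain in three moves: first, use periodicity in $y$ to replace $y$ by $y-[y]\in(0,1)^{r}$; second, apply the functional equation to move to arguments $(1-s,A^{*},y-[y],1-x)$; third, apply periodicity in the fourth slot to replace $1-x$ by $1-x-[1-x]$. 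The essential algebraic identity is that $[1-x]=-[x]$ whenever $x\notin\mathbb{Z}^{r}$, whence $1-x-[1-x]=1-(x-[x])\in(0,1)^{r}$. Both fractional arguments then lie in $(0,1)^{r}$, where $M$ vanishes, and unwinding the chain yields $M(s,A,x,y)=0$.

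The main obstacle is purely the bookkeeping: when the functional equation is applied inside the uniqueness argument, one must track the gamma factors $\Gamma_{\chi}(s)/\Gamma_{\chi}(1-s)$, the normalization $\left|\det A\right|^{\pm 1/2}$, and the exponential phase $i_{\chi}e(-x(y-[y]))$ collected by the reduction, and confirm that none of these factors can vanish or blow up so as to obstruct the conclusion that $M$ itself is zero. Since the pre-factors are everywhere nonzero on the domains considered, the obstruction is indeed illusory, and the identity $[1-x]=-[x]$ for non-integral $x$ is the algebraic core that makes the three-step reduction close up cleanly.
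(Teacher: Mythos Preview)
Your proposal is correct. The paper carries out exactly the three-line existence calculation you reference (in the text immediately preceding the proposition) and then states the uniqueness claim without a separate proof; your three-step reduction---periodicity in $y$, the functional equation, periodicity in $y$ again---hinging on the identity $[1-x]=-[x]$ for $x\notin\mathbb{Z}^{r}$ is the natural argument and fills in what the paper omits. One terminological quibble: the functional equation satisfied by $M$ is not literally ``homogeneous'' (the factor $i_{\chi}e(-xy)$ is still present), but since both candidate extensions obey the \emph{same} linear-in-$L$ equation, their difference does too, and your chain of reductions goes through unchanged because that factor never vanishes.
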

We extend the definition of $L_{\chi}(s,A,x,y)$ quasiperiodically
as in Proposition \ref{prop:The-unique-extension}. Now, complementarily
let us define the functions $G\left(t,x,y\right)$ and $R_{\chi}(s,A,x,y)$
by 
\begin{align*}
G\left(t,x,y\right) & =e\left(xy\right)F\left(t,x,y\right)\\
 & =\prod_{\nu=1}^{r}\frac{e\left(\left(y_{\nu}+it_{\nu}\right)x_{\nu}\right)}{\left(1-e\left(y_{\nu}+it_{\nu}\right)\right)}
\end{align*}
and $ $
\[
R_{\chi}(s,A,x,y)=e(xy)L_{\chi}(s,A,x,y).
\]
 Then the quasiperiodicity and the functional equation become 
\begin{align*}
L_{\chi}(s,A,x+k,y+l) & =e(-ky)L_{\chi}(s,A,x,y),\\
R_{\chi}(s,A,x+k,y+l) & =e(xl)R_{\chi}(s,A,x,y),
\end{align*}
for $k,l\in\mathbb{Z}^{r},$ and 
\begin{align*}
\hat{R}_{\chi}(s,A,x,y) & =i_{\chi}^{\pm1}\hat{L}_{\chi}(1-s,A^{*},\pm y,\mp x),\\
\hat{L}_{\chi}(s,A,x,y) & =i_{\chi}^{\pm1}\hat{R}_{\chi}(1-s,A^{*},\pm y,\mp x),
\end{align*}
 with $\hat{R}_{\chi}(s,A,x,y)=\Gamma_{\chi}(s)R_{\chi}(s,A,x,y)$. 

The following proposition gives a differential property of $L_{\chi}(s,A,x,y)$
and $R_{\chi}(s,A,x,y)$.
\begin{prop}
\label{prop:diff}Let $A=(a_{\mu\nu})_{\mu,\nu=1}^{r},A^{*}=(a_{\mu\nu}^{*})_{\mu,\nu=1}^{r}$
. Then we have 
\begin{align*}
\frac{\partial}{\partial x_{\nu}}L_{\chi}(s,A,x,y) & =-\sum_{\mu=1}^{r}a_{\mu\nu}s_{\mu}L_{\chi+1_{\mu}}(s+1_{\mu},A,x,y),\\
\frac{\partial}{\partial y_{\nu}}R_{\chi}(s,A,x,y) & =2\pi i\sum_{\mu=1}^{r}a_{\mu\nu}^{*}R_{\chi+1_{\mu}}(s-1_{\mu},A,x,y),
\end{align*}
or equivalently, 
\begin{align*}
\left(\sum_{\nu=1}^{r}a_{\mu\nu}^{*}\frac{\partial}{\partial x_{\nu}}\right)L_{\chi}(s,A,x,y) & =-s_{\mu}L_{\chi+1_{\mu}}(s+1_{\mu},A,x,y),\\
\left(\sum_{\nu=1}^{r}a_{\mu\nu}\frac{\partial}{\partial y_{\nu}}\right)R_{\chi}(s,A,x,y) & =2\pi i\, R_{\chi+1_{\mu}}(s-1_{\mu},A,x,y).
\end{align*}
Here, we identify a parity type $\chi$ with the vector $\chi=(\chi(\nu))_{\nu=1}^{r}\in(\mathbb{Z}/2\mathbb{Z})^{r}$
and we denote by $1_{\mu}=(0,\ldots,1,\ldots,0)$ a vector whose entries
are $0$ except for the $\mu$-th entry being $1$. 
\end{prop}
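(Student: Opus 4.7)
The plan is to prove both pairs of identities by differentiating under the integral sign in the representations of $L_{\chi}$ and $R_{\chi}$, then cleaning up with the elementary identity $\Gamma_{\mathbb{C}}(s+1)=\tfrac{s}{2\pi}\Gamma_{\mathbb{C}}(s)$ for the gamma factors. The two ``equivalent'' forms of each identity differ only by left multiplication by $A^{*}$ or $A$: using $\sum_{\nu}a^{*}_{\mu\nu}a_{k\nu}=\delta_{\mu k}$ (which holds because $A^{*}A^{t}=I$), one passes between them. So it suffices to prove the first form in each case.

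For $\partial_{x_{\nu}}L_{\chi}$, the key elementary computation is $\partial_{x}\varphi(t,x,y)=-2\pi t\,\varphi(t,x,y)$, which follows immediately from $e(itx)=e^{-2\pi tx}$. Hence
\[
\partial_{x_{\nu}}F(tA,x,y)=-2\pi (tA)_{\nu}F(tA,x,y)=-2\pi\sum_{\mu}t_{\mu}a_{\mu\nu}F(tA,x,y).
\]
Since $t_{\mu}\left|t\right|_{\chi}^{s}=\left|t\right|_{\chi+1_{\mu}}^{s+1_{\mu}}$, differentiating the defining integral of $L_{\chi}$ under the integral sign and recognising the resulting integral as defining $L_{\chi+1_{\mu}}(s+1_{\mu},A,x,y)$ produces a gamma-factor ratio $\Gamma_{\mathbb{C}}(s_{\mu}+1)/\Gamma_{\mathbb{C}}(s_{\mu})=s_{\mu}/(2\pi)$ that cancels the $-2\pi$ and leaves the desired coefficient $-s_{\mu}a_{\mu\nu}$.

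For $\partial_{y_{\nu}}R_{\chi}$, I would first rewrite
\[
R_{\chi}(s,A,x,y)=\frac{1}{\prod_{i}\Gamma_{\mathbb{C}}(s_{i})}\int_{(\mathbb{R}^{r})^{\times}}G(tA,x,y)\left|t\right|_{\chi}^{s}d^{\times}t,
\]
and exploit that $G(tA,x,y)=\prod_{\nu}g(w_{\nu},x_{\nu})$ with $g(w,x)=e(wx)/(1-e(w))$ and $w_{\nu}=y_{\nu}+i(tA)_{\nu}$ depends on the pair $(t,y)$ only through the $w_{\nu}$. This yields the Cauchy--Riemann-like identity
\[
\sum_{\nu}a_{\mu\nu}\partial_{y_{\nu}}G(tA,x,y)=-i\,\partial_{t_{\mu}}G(tA,x,y).
\]
Integrating by parts in $t_{\mu}$ and using $\partial_{t_{\mu}}\bigl[\mathrm{sgn}(t_{\mu})^{\chi(\mu)}|t_{\mu}|^{s_{\mu}-1}\bigr]=(s_{\mu}-1)\mathrm{sgn}(t_{\mu})^{\chi(\mu)+1}|t_{\mu}|^{s_{\mu}-2}$ transforms the measure into $-(s_{\mu}-1)|t|_{\chi+1_{\mu}}^{s-1_{\mu}}d^{\times}t$. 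Combined with the ratio $\Gamma_{\mathbb{C}}(s_{\mu}-1)/\Gamma_{\mathbb{C}}(s_{\mu})=2\pi/(s_{\mu}-1)$, everything collapses to exactly $2\pi i\cdot R_{\chi+1_{\mu}}(s-1_{\mu},A,x,y)$.

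The main technical point is justifying this integration by parts: boundary terms at $t_{\mu}\to\pm\infty$ vanish by the Schwartz decay of $G(tA,\cdot,\cdot)$ for $A\in GL_{r}(\mathbb{R})$ (as already used in the proof of Theorem \ref{thm:Functional equation}), while the boundary at $t_{\mu}\to 0^{\pm}$ requires restricting initially to $\mathrm{Re}(s_{\mu})>1$ so that $|t_{\mu}|^{s_{\mu}-1}$ vanishes there. Since both sides are holomorphic on all of $\mathbb{C}^{r}$ by Corollary \ref{cor:holom}, the identity then extends to the full domain by analytic continuation; differentiation under the integral sign in both cases is justified by the rapid decay of $F(tA,\cdot,\cdot)$ and $G(tA,\cdot,\cdot)$ and their derivatives.
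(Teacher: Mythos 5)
Your proposal is correct and takes essentially the same route as the paper, which deduces the proposition from the observations $\frac{\partial}{\partial x_{\nu}}F(t,x,y)=-2\pi t_{\nu}F(t,x,y)$ and $\frac{\partial}{\partial y_{\nu}}G(t,x,y)=-i\frac{\partial}{\partial t_{\nu}}G(t,x,y)$ together with integration by parts. You simply make explicit the details (the gamma-factor ratios, the boundary terms at $t_{\mu}\to 0$ and $t_{\mu}\to\pm\infty$, and the analytic continuation via Corollary \ref{cor:holom}) that the paper leaves to the reader.
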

Proposition \ref{prop:diff} follows from the observation 
\begin{align*}
\frac{\partial}{\partial x_{\nu}}F\left(t,x,y\right) & =-2\pi t_{\nu}F\left(t,x,y\right),\\
\frac{\partial}{\partial y_{\nu}}G\left(t,x,y\right) & =-i\frac{\partial}{\partial t_{\nu}}G\left(t,x,y\right),
\end{align*}
and integration by parts. Using these differential operators, we now
see that the functional equation of one parity type can generate the
functional equation of any parity type. 

Finally we give special values of $L_{\chi}(s,A,x,y)$ at non-positive
integers and at some positive integers. Let us first define the multiple
Bernoulli-like number $B_{k}(A,x,y)$.
\begin{defn}
For $k\in\mathbb{Z}_{\geq0}^{r},$ we define $B_{k}(A,x,y)$ by the
coefficients of the Taylor expansion of $F(tA,x,y)$ i.e. 
\[
F(tA,x,y)=\sum_{k\in\mathbb{Z}_{\geq0}^{r}}B_{k}(A,x,y)\frac{(-2\pi t)^{k}}{k!},
\]
where for multi-index $k=(k_{1},\ldots,k_{r}),$ we put $t^{k}=\prod_{\nu=1}^{r}t_{\nu}^{k_{\nu}}$
and $k!=\prod_{\nu=1}^{r}k_{\nu}!$.
\end{defn}
Then we have the following proposition.
\begin{prop}
\textup{\label{prop:specialvals}For $k\in\mathbb{Z}_{\geq0}^{r},$
we have 
\[
L_{\chi}(-k,A,x,y)=\begin{cases}
B_{k}(A,x,y) & \mbox{if }k\equiv1-\chi\pmod{2\mathbb{Z}^{r}}\\
0 & \mbox{otherwise}
\end{cases},
\]
 and for $k\in\mathbb{Z}_{>0}^{r}$ such that $k\equiv\chi\pmod{2\mathbb{Z}^{r}}$,
\[
L_{\chi}\left(k,A,x,y\right)=\left|\det(A)\right|^{-1}e\left(-xy\right)B_{k-1}\left(A^{*},y,1-x\right)\frac{\left(2\pi i\right)^{k}}{2^{r}\left(k-1\right)!}.
\]
 }\end{prop}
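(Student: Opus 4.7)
The plan is to establish the values at non-positive integers first via a direct Mellin-type residue analysis of the integral representation of $L_\chi$, and then deduce the positive integer values by applying the functional equation of Theorem \ref{thm:Functional equation}.

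For the non-positive integers, I start from the defining integral
\[
L_\chi(s,A,x,y)=\frac{1}{\prod_{\nu=1}^r\Gamma_\mathbb{C}(s_\nu)}\int_{(\mathbb{R}^r)^\times} F(tA,x,y)\,|t|_\chi^s\prod_\nu\frac{dt_\nu}{t_\nu}
\]
and split the domain into the $2^r$ coordinate octants indexed by $\sigma\in\{\pm1\}^r$. In the octant $\sigma(0,\infty)^r$, the change of variable $t=\sigma u$ extracts the common sign $\sigma^{\chi+1}=\sigma^{1-\chi}$ (from the combination of $|t|_\chi^s$ with $\prod dt_\nu/t_\nu$) and reduces the integrand to $F(u\sigma A,x,y)\,u^{s-1}\,du$. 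Since $F(\cdot A,x,y)$ is Schwartz for $A\in GL_r(\mathbb{R})$, this octant integral has the standard meromorphic continuation with simple polar hyperplanes $\{s_\nu=-k_\nu\}$ for $k_\nu\in\mathbb{Z}_{\geq 0}$, and its multi-residue at $s=-k$ equals the Taylor coefficient of $u^k$ in $F(u\sigma A,x,y)$. Rewriting $F((\sigma u)A,x,y)$ by the defining Taylor expansion of $B_k$ yields the covariance $B_k(\sigma A,x,y)=\sigma^k B_k(A,x,y)$, so this multi-residue equals $\sigma^k B_k(A,x,y)(-2\pi)^{|k|}/k!$, where $|k|=k_1+\cdots+k_r$.

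Summing over $\sigma$ produces the parity sum $\prod_\nu(1+(-1)^{k_\nu+1-\chi(\nu)})$, which equals $2^r$ exactly when $k\equiv1-\chi\pmod{2\mathbb{Z}^r}$ and vanishes otherwise. Pairing this with the simple zero of $\prod_\nu\Gamma_\mathbb{C}(s_\nu)^{-1}$ at $s=-k$, computed from the residue $2(-1)^{k_\nu}(2\pi)^{k_\nu}/k_\nu!$ of $\Gamma_\mathbb{C}$ at $-k_\nu$, the factors $2^r$, $(-2\pi)^{|k|}$, and $k!$ all cancel exactly, yielding $L_\chi(-k,A,x,y)=B_k(A,x,y)$ in the non-vanishing case. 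When $k\not\equiv 1-\chi\pmod{2\mathbb{Z}^r}$, the integral fails to have a pole in at least one coordinate $s_\nu$ while $\Gamma_\mathbb{C}(s_\nu)^{-1}$ still vanishes there, so $L_\chi(-k,A,x,y)=0$.

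For the positive integer case, specialize the functional equation to $s=k$ with $k\in\mathbb{Z}_{>0}^r$ and $k\equiv\chi\pmod{2\mathbb{Z}^r}$. Expanding the definition of $\hat L_\chi$ gives
\[
L_\chi(k,A,x,y)=i_\chi e(-xy)|\det A|^{-1}\frac{\Gamma_\chi(1-k)}{\Gamma_\chi(k)}L_\chi(1-k,A^*,y,1-x).
\]
Since $k-1\equiv 1-\chi\pmod{2\mathbb{Z}^r}$, the non-positive integer result applies and gives $L_\chi(1-k,A^*,y,1-x)=B_{k-1}(A^*,y,1-x)$. It remains to verify the one-variable identity
\[
i^{\chi(\nu)}\frac{\Gamma_\mathbb{R}(1-k_\nu+\chi(\nu))}{\Gamma_\mathbb{R}(k_\nu+\chi(\nu))}=\frac{(2\pi i)^{k_\nu}}{2(k_\nu-1)!}
\]
for each $\nu$ (under $k_\nu\equiv\chi(\nu)\pmod2$ and $k_\nu\geq1$), whose $r$-fold product supplies the remaining constant. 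This follows from the reflection formula $\Gamma(z)\Gamma(1-z)=\pi/\sin\pi z$ together with $\Gamma_\mathbb{C}(s)=\Gamma_\mathbb{R}(s)\Gamma_\mathbb{R}(s+1)$, or by direct induction using $\Gamma(z+1)=z\Gamma(z)$ after separating the two parity cases $\chi(\nu)=0,1$.

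The main obstacle is to coordinate the three sign sources in the residue calculation---the measure $\prod dt_\nu/t_\nu$ (rather than $\prod dt_\nu/|t_\nu|$), the quasi-character $|t|_\chi^s$, and the covariance $B_k(\sigma A)=\sigma^k B_k(A)$---so that the parity constraint $k\equiv 1-\chi\pmod{2\mathbb{Z}^r}$ emerges correctly (and not, say, $k\equiv\chi$). After this is correctly handled, the numerical cancellations and the one-variable gamma identity are routine verifications.
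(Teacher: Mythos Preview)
Your proof is correct and follows essentially the same plan as the paper's: obtain $L_\chi(-k,A,x,y)$ from the Taylor coefficients of $F$ together with the sign sum $\sum_\sigma\sigma^{1-\chi+k}$, then deduce the positive-integer values from the functional equation and a gamma-factor identity. The only technical difference is that the paper evaluates $L(-k,A,x,y)=B_k(A,x,y)$ via a Hankel-contour rewriting of the Mellin integral, while you instead pair the simple poles of the multi-Mellin transform against the simple zeros of $\prod_\nu\Gamma_{\mathbb C}(s_\nu)^{-1}$; these are standard equivalent ways to read off the same residue data.
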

\begin{proof}
As one can easily check, the Shintani L\emph{-}function is expressed
in terms of the contour integral 
\[
\prod_{\nu=1}^{r}\left(\frac{2}{\Gamma_{\mathbb{C}}\left(s_{\nu}\right)\left(e\left(s_{\nu}\right)-1\right)}\right)\int_{L}\cdots\int_{L}F\left(tA,x,y\right)t^{s}\frac{dt_{1}}{t_{1}}\cdots\frac{dt_{r}}{t_{r}},
\]
where $L$ is a contour which starts and ends at $+\infty$ and circles
the origin once counterclockwise without encircling any poles other
than the origin. By the formula $\Gamma_{\mathbb{C}}(s)\Gamma_{\mathbb{C}}(1-s)=4i(e(s/2)-e(-s/2))^{-1},$
we see that 
\begin{align*}
L(s,A,x,y) & =\prod_{\nu=1}^{r}\left(\frac{e\left(-\frac{s_{\nu}}{2}\right)\Gamma_{\mathbb{C}}(1-s_{\nu})}{2i}\right)\int_{L}\cdots\int_{L}F\left(tA,x,y\right)t^{s}\frac{dt_{1}}{t_{1}}\cdots\frac{dt_{r}}{t_{r}}.
\end{align*}
 From the definition of $B_{k}\left(A,x,y\right)$ and the residue
theorem, we find that, for $k\in\mathbb{Z}_{\geq0}^{r}$, 
\begin{align*}
L(-k,A,x,y) & =(2\pi i)^{r}\prod_{\nu=1}^{r}\left(\frac{e\left(\frac{k_{\nu}}{2}\right)\Gamma_{\mathbb{C}}(1+k_{\nu})}{2i}\right)B_{k}(A,x,y)\frac{(-2\pi)^{k}}{k!}\\
 & =B_{k}(A,x,y).
\end{align*}
Using the relation $L_{\chi}\left(s,A,x,y\right)=2^{-r}\sum_{\sigma\in\left\{ \pm1\right\} ^{r}}\sigma^{1-\chi}L\left(s,\sigma A,x,y\right),$
we have 
\begin{align*}
L_{\chi}\left(-k,A,x,y\right) & =2^{-r}\sum_{\sigma\in\left\{ \pm1\right\} ^{r}}\sigma^{1-\chi}B_{k}(\sigma A,x,y)\\
 & =2^{-r}B_{k}(A,x,y)\sum_{\sigma\in\left\{ \pm1\right\} ^{r}}\sigma^{1-\chi+k}.
\end{align*}
Hence we have proved the former statement of the Proposition \ref{prop:specialvals}.
The latter statement is proved by applying the functional equation
for $L_{\chi}(s,A,x,y)$ to the former statement. 
\end{proof}
From the contour integral representation of $L_{\chi}(s,A,x,y)$ in
the proof of Proposition \ref{prop:specialvals}, the following fact
follows immediately.
\begin{cor}
\label{cor:holom}$L_{\chi}(s,A,x,y)$ is a holomorphic function on
$s\in\mathbb{C}^{r}$.\end{cor}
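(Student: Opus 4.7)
The plan is to read the claim directly off the contour integral representation of $L(s,A,x,y)$ derived in the proof of Proposition~\ref{prop:specialvals}, namely
\[
L(s,A,x,y)=\prod_{\nu=1}^{r}\frac{e(-s_{\nu}/2)\,\Gamma_{\mathbb{C}}(1-s_{\nu})}{2i}\int_{L}\!\cdots\!\int_{L}F(tA,x,y)\,t^{s}\,\frac{dt_{1}}{t_{1}}\cdots\frac{dt_{r}}{t_{r}},
\]
combined with the linear identity $L_{\chi}(s,A,x,y)=2^{-r}\sum_{\sigma\in\{\pm1\}^{r}}\sigma^{1-\chi}L(s,\sigma A,x,y)$. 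Since a finite linear combination of entire functions is entire, it suffices to show that each $L(s,A,x,y)$ extends to an entire function of $s\in\mathbb{C}^{r}$.

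First I would check that the iterated Hankel-type contour integral itself is an entire function of $s$. The hypothesis $y\in(0,1)^{r}$ forces $1-e(y_{\mu}+it_{\mu})\neq 0$ at $t_{\mu}=0$, so each factor $\varphi(t_{\mu},x_{\mu},y_{\mu})$, and hence $F(tA,x,y)$, is holomorphic in a neighbourhood of the origin. A short inspection of $\varphi$ shows it decays exponentially along both real directions, so on compact subsets of $s\in\mathbb{C}^{r}$ the integrand $F(tA,x,y)\,t^{s}$ is dominated along $L^{r}$ by an integrable function. Differentiation under the integral sign is then legitimate and yields entirety of the contour integral.

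Next I would argue that the meromorphic prefactor is fully cancelled. The product $\prod_{\nu}\Gamma_{\mathbb{C}}(1-s_{\nu})$ contributes only simple poles, placed along the hyperplanes $s_{\nu}\in\mathbb{Z}_{>0}$. Fix such $\nu$ and $k\in\mathbb{Z}_{>0}$; at $s_{\nu}=k$ the one-variable Hankel loop in $t_{\nu}$ reduces by Cauchy's theorem to $2\pi i$ times the residue at $t_{\nu}=0$ of a function holomorphic there, which vanishes. The resulting order-one zero of the iterated integral along $s_{\nu}=k$ cancels the simple pole of the prefactor, so $L(s,A,x,y)$ is holomorphic on all of $\mathbb{C}^{r}$, and therefore so is $L_{\chi}(s,A,x,y)$.

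The main subtlety I expect is justifying this contour representation when it is applied to the matrices $\sigma A$ appearing in the $L_{\chi}$ decomposition: for $\sigma A$ with mixed-sign entries, the Mellin-to-Hankel deformation implicit in Proposition~\ref{prop:Integral-representation} must be carried out in a direction where $F(t\sigma A,x,y)$ still decays. Fortunately the two-sided decay of $\varphi$ along the real axis allows each Hankel loop to be steered into the appropriate half-line of $\mathbb{R}$ component by component, after which the cancellation argument above goes through uniformly in $\sigma$.
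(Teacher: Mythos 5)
Your proposal takes essentially the same route as the paper: the paper derives the Hankel-type contour integral representation of $L(s,A,x,y)$ in the proof of Proposition \ref{prop:specialvals} and simply asserts that the corollary ``follows immediately,'' while you supply the details it leaves implicit (entirety of the loop integral, and cancellation of the poles of $\Gamma_{\mathbb{C}}(1-s_{\nu})$ at $s_{\nu}\in\mathbb{Z}_{>0}$ by the vanishing of the single-valued loop integral there, uniformly over the matrices $\sigma A$). This is correct and consistent with the paper's argument.
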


\end{document}